\theoremstyle{definition}
\newtheorem{definition}{Definition}[section]
\newtheorem{remark}[definition]{Remark}
\theoremstyle{plain}
\newtheorem{theorem}{Theorem}
\newtheorem{lemma}[definition]{Lemma}
\newcommand{\Rplus}{\mathbb{R}_{\ge 0}}
\newcommand{\PP}{\mathbb{P}}
\newcommand{\F}{\mathcal{F}}
\newcommand{\Econd}[2]{\mathbb{E}\left[\left.#1\right|#2\right]}
\newcommand{\Ex}[2]{\mathbb{E}^{#1}\left[#2\right]}                     
\newcommand{\E}[1]{\mathbb{E}\left[#1\right]}                     
\newcommand{\Excond}[3]{\mathbb{E}^{#1}\left[\left.#2\right|#3\right]}  
\newcommand{\Ind}[1]{\mathbf{1}_{\left\{#1\right\}}}
\newcommand{\set}[1]{\left\{#1\right\}}
\title{Simple examples of pure-jump strict local martingales}
 \author{Martin Keller-Ressel}
 \address{TU Dresden, Department of Mathematics, 01062 Dresden, Germany}
 \email{martin.keller-ressel@tu-dresden.de}
 \thanks{I would like to thank Philip Protter and an anonymous referee for helpful comments and to acknowledge funding from the Excellence Initiative of the German Research Foundation (DFG) under Grant ZUK 64.}
\date{\today}
\begin{document}

\begin{abstract}
We present simple new examples of pure-jump strict local martingales. The examples are constructed as exponentials of self-exciting affine Markov processes. We characterize the strict local martingale property of these processes by an integral criterion and by non-uniqueness of an associated ordinary differential equation. Finally we show an alternative construction for our examples by an absolutely continuous measure change in the spirit of (Delbaen and Schachermayer, PTRF 1995).
\end{abstract}

\maketitle

\section{Introduction}

Strict local martingales, i.e. local martingales which are not true martingales have attracted the interest of researchers in probability theory and financial mathematics. Examples of strict local martingales can shed light on the demarcation line between true and local martingales which plays a role in fundamental results such as Girsanov's theorem. In financial mathematics strict local martingales illustrate the pathological behavior of markets which are free of arbitrage but where  market prices deviate from fundamental prices (cf. \cite{Loewenstein2000}) and put-call-parity ceases to hold (cf. \cite{Cox2005}). For these reasons strict local martingales are often regarded as mathematical models for asset price bubbles. 

Most known examples of strict local martingales are continuous processes. Exceptions are the examples \cite[Ex.~3.11]{Kallsen2008a}, \cite[Sec.~4]{mijatovic2011note} and the more recent construction of \cite{Protter2014} through filtration shrinkage. 
In this note we present several simple new examples of pure-jump strict local martingales. These examples are constructed as exponentials of self-exciting jump processes of affine type. In this context, self-excitement means that the intensity of jumps is proportional to the value of the process itself. It is widely accepted (cf. \cite{Sornette2014}) that self-excitement plays a crucial role in the appearance of real-world asset price bubbles, so that from the point of view of financial modeling this construction seems natural. It is also well-known that self-exciting behavior of certain affine processes can lead to explosion in finite time (cf. \cite{DFS2003}) and both the example of \cite{Kallsen2008a} and our examples are based on modifying an explosive affine process in just the right way -- in our case by exponential tilting -- such that explosion does no longer occur, but the martingale property fails despite being a local martingale.
In Section 2 we introduce a first example of such a strict local martingale and show in section 3 how this example can be generalized to a much larger class of processes. Finally, in section 4 we provide an alternative construction of the processes by an absolutely continuous measure change.  

\section{A pure-jump strict local martingale}\label{Sec:example}
Let a filtered probability space $(\Omega, \F, (\F_t)_{t \geq 0}, \PP)$ be given and assume that the filtration satisfies the usual conditions. Define the measure 
\begin{equation}\label{eq:mu}
\mu(d\xi) = \frac{1}{2\sqrt{\pi}} e^{-\xi}\xi^{-3/2}\,d\xi, \qquad \xi \in \Rplus
\end{equation}
and let the adapted c\`adl\`ag process $X$ be given as the Feller process with state space $\Rplus$ and with generator 
 \begin{equation}\label{eq:generator}
 \mathcal{A}f(x) = - \frac{x}{2} f'(x) + x \int_0^\infty \left(f(x + \xi) - f(x) - f'(x)\xi\right)\mu(d\xi)
 \end{equation}
for all $f$ from the core $C_c^\infty(\Rplus)$.  We assume that the starting point $X_0$ is deterministic and strictly positive. The existence of such a process $X$ is guaranteed by \cite[Thm.~2.7]{DFS2003} and the solution belongs to the class of conservative non-negative affine processes (see \cite[Lem.~9.2]{DFS2003}). From \cite[Thm.~2.12]{DFS2003} it is known that $X$ is a semimartingale\footnote{We refer to \cite{Jacod1987} for notation and terminology related to semimartingales.} with characteristics
\[B_t(\omega) = -\frac{1}{2} \int_0^t {X_{s-}(\omega)} ds, \quad C_t(\omega) = 0, \quad \nu(\omega,d\xi,ds) = X_{s-}(\omega) \mu(d\xi) ds\]
relative to the truncation function $h(x) = x$. Writing $J(\omega, d\xi,ds)$ for the random measure associated to the jumps of $X$  and 
 \[\overline J(\omega,X_{s-}, d\xi,ds) = J(\omega,d\xi,ds) - X_{s-} \mu(d\xi)ds\]
 for its compensated version, the canonical representation of $X$ (cf. \cite[Thm.~II.2.34]{Jacod1987}) is given by 
  \begin{equation}\label{eq:X}
 X_t = X_0  + \int_{\Rplus \times [0,t]} \xi \, \overline{J}(\omega,X_{s-}, d\xi,ds) - \frac{1}{2} \int_0^t X_{s-} ds.
 \end{equation}
This equation can be interpreted as an integral equation for $X$. Since the semimartingale characteristics determine the law of an affine process $X$ uniquely (cf. \cite[Thm.~2.12]{DFS2003}) we can consider $X$ as the weak solution, unique in law, of this integral equation.
Interpreting equation \eqref{eq:X} and taking into account the definition of $\overline J(\omega,X_{s-},d\xi,ds)$ we see that $X$ jumps upwards at a rate that is proportional to the current value $X_{t-}$, i.e. the jumps of $X$ are self-exciting and increase in rate as $X$ increases. This upward movement is counteracted by the compensator of the jump measure and an additional negative drift at rate $\frac{1}{2}X_{t-}$. The process $X$ has infinite jump-intensity but trajectories of a.s. finite variation since
 \begin{align*}
 \int_0^\infty \mu(d\xi) &=  \frac{1}{2\sqrt{\pi}} \int_0^\infty e^{-\xi}\xi^{-3/2} d\xi = \infty, \qquad \text{but}\\
 \int_0^\infty \xi \mu(d\xi) &=  \frac{1}{2\sqrt{\pi}} \int_0^\infty e^{-\xi}\xi^{-1/2} d\xi = \frac{1}{2}.
 \end{align*}
The latter equation implies by \cite[Prop.~II.2.29]{Jacod1987} that $X$ is in fact a \emph{special} semimartingale.
The following property makes $X$ interesting:
 \begin{theorem}\label{thm:strict}
 The process $S = e^X - 1$ is a strict local martingale.
 \end{theorem}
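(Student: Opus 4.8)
The plan is to show that $S$ is a non-negative local martingale for which $t\mapsto\E{S_t}$ strictly decreases; being non-negative it is a supermartingale, so this decrease rules out the true martingale property. The affine structure turns $\E{S_t}$ into a scalar ODE, and the decrease comes from non-uniqueness of that ODE at a boundary point. For the local martingale property I would apply It\^o's formula to the exponentials $x\mapsto e^{ux}$, $u\le 1$, via the canonical representation \eqref{eq:X}. Since $e^{u\xi}-1-u\xi=O(\xi^2)$ near $0$ and $\int_1^\infty e^{u\xi}\mu(d\xi)<\infty$ for $u\le 1$, all integrals converge, and a short computation using $\int_0^\infty(1-e^{-a\xi})\xi^{-3/2}\,d\xi=2\sqrt{\pi a}$ and $\Gamma(1/2)=\sqrt\pi$ gives
\[\mathcal A e^{ux}=x\,e^{ux}\,R(u),\qquad R(u)=1-u-\sqrt{1-u}\quad(u\le 1),\]
so in particular $R(1)=0$. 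Plugging $f(x)=e^x$ into It\^o's formula for \eqref{eq:X} and using $R(1)=0$ to cancel the compensated upward-jump term against the negative drift $-\int_0^t X_{s-}\,ds$, one obtains $e^{X_t}=e^{X_0}+M_t$ with $M$ a local martingale; localizing along $\tau_n=\inf\{t:X_t\ge n\}$ (which increases to $\infty$ because $X$ is conservative) makes the jump term integrable on each stopped interval, since $\int_0^\infty(e^\xi-1)\mu(d\xi)=1<\infty$. Hence $S=e^X-1$ is a local martingale; as $X\ge 0$ it is non-negative, hence a supermartingale, and $\E{S_t}\le S_0$. It remains to make this strict for $t>0$.

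For $u\in[0,1)$ let $\psi(\cdot,u)$ be the unique solution of $\partial_t\psi=R(\psi)$, $\psi(0,u)=u$; since $R<0$ on $(0,1)$ and $R$ is smooth there, $\psi(\cdot,u)$ is strictly decreasing with $\psi(t,u)\in(0,u)$ for $t>0$. I would then verify the affine transform formula $\E{e^{uX_t}}=e^{\psi(t,u)X_0}$ directly: the process $N^u_t:=e^{\psi(T-t,u)X_t}$ is, by It\^o's formula combined with the ODE (legitimate because $\psi(T-t,u)\le u<1$), a local martingale on $[0,T]$; choosing $p\in(1,1/u)$ one has $(N^u_t)^p\le e^{puX_t}$, while $\E{e^{puX_t}}\le e^{puX_0}$ because the analogous process with exponent $pu<1$ is again a non-negative local martingale, hence a supermartingale started at $e^{\psi(t,pu)X_0}\le e^{puX_0}$; thus $N^u$ is $L^p$-bounded, uniformly integrable, a true martingale, and $\E{e^{uX_t}}=N^u_0=e^{\psi(t,u)X_0}$.

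Finally I let $u\uparrow 1$. On the left, $e^{uX_t}\uparrow e^{X_t}$, so $\E{e^{uX_t}}\to\E{e^{X_t}}$ by monotone convergence. On the right, $\psi(t,u)$ increases in $u$ and stays below $1$, and because $\int_0\frac{dv}{\sqrt v-v}<\infty$ the function $G(a):=\int_a^1\frac{du'}{|R(u')|}$ is finite for $a\in(0,1)$ and is a strictly decreasing bijection of $(0,1)$ onto $(0,\infty)$; passing to the limit in the separation-of-variables identity $t=\int_{\psi(t,u)}^u\frac{du'}{|R(u')|}$ gives $t=G\big(\lim_{u\uparrow1}\psi(t,u)\big)$, i.e. $\psi(t,u)\uparrow\psi^*(t):=G^{-1}(t)$, which is precisely the non-constant solution of $\partial_t\psi^*=R(\psi^*)$, $\psi^*(0)=1$. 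Solving that separable equation (substitute $w=\sqrt{1-\psi^*}$, so that $\dot w=(1-w)/2$) yields $\psi^*(t)=2e^{-t/2}-e^{-t}=1-(1-e^{-t/2})^2<1$ for $t>0$. Hence $\E{e^{X_t}}=\exp\big((2e^{-t/2}-e^{-t})X_0\big)$, and since $X_0>0$,
\[\E{S_t}=e^{(2e^{-t/2}-e^{-t})X_0}-1<e^{X_0}-1=S_0\qquad(t>0),\]
so $S$ fails to be a true martingale.

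The crux — and the only genuinely delicate point — is this last step: the ODE $\partial_t\psi=R(\psi)$ also admits the constant solution $\psi\equiv 1$ through the point $u=1$, and if $\lim_{u\uparrow1}\psi(t,u)$ were that one we would get $\E{e^{X_t}}=e^{X_0}$ and $S$ would be a true martingale. Excluding this is exactly where the convergence of $\int_0\frac{dv}{\sqrt v-v}$ — equivalently, the non-Lipschitzness of $R$ at $1$, i.e. the non-uniqueness advertised in the abstract — is used, together with a comparison argument showing the solutions $\psi(\cdot,u)$, $u<1$, stay strictly below every solution emanating from $1$ and increase to the minimal one. The remaining ingredients (the identity for $R$, the cancellation producing the local martingale, the $L^p$-bound) are routine.
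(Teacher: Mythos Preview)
Your proof is correct and follows essentially the same route as the paper: It\^o's formula yields the local martingale property and the Riccati equation $\partial_t g=R(g)$; a de~la~Vall\'ee-Poussin/$L^p$ bound (your $p\in(1,1/u)$ is exactly the paper's $f(x)=x^{1/(1-\epsilon)}$) upgrades $N^u$ to a true martingale for $u<1$; and monotone convergence as $u\uparrow 1$ gives $\E{e^{X_t}}=e^{\psi^*(t)X_0}$ with $\psi^*(t)=2e^{-t/2}-e^{-t}<1$. The only cosmetic difference is that the paper solves the Riccati equation explicitly for all $u<1$ and then lets $u\to 1$ in the closed-form solution, whereas you identify the limit $\psi^*$ as the minimal solution through $1$ via the integral criterion $\int_{1-\epsilon}^1 d\eta/|R(\eta)|<\infty$---which is precisely the content of the paper's subsequent Theorem~\ref{thm:integrability}.
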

This theorem can be derived easily from the following Lemma.
\begin{lemma}\label{lem:mgf}The process $S = e^X - 1$ is a local martingale and 
\begin{equation}
\E{e^{u X_t}} = \exp\left\{X_0 \left(1 - \left(w(u)e^{-t/2} - 1\right)^2\right)\right\}, \qquad \text{for all} \qquad u < 1
\end{equation}
where $w(u) = 1 - \sqrt{1 - u}$.
\end{lemma}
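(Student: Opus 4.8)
The plan is to reduce both assertions to one computation, that of the \emph{affine exponent}
\[ R(u) \;=\; -\frac{u}{2} + \int_0^\infty\bigl(e^{u\xi}-1-u\xi\bigr)\mu(d\xi), \]
i.e.\ the function satisfying $\mathcal{A}e^{ux}=R(u)\,x\,e^{ux}$ (a formal identity, which below will be used only in verified integrated forms). The point is that $\mu$ is an exponentially tilted version of the L\'evy measure $\tfrac{1}{2\sqrt\pi}\xi^{-3/2}d\xi$ of the one-sided $\tfrac12$-stable subordinator: from the elementary identity $\int_0^\infty(1-e^{-\lambda\xi})\tfrac{1}{2\sqrt\pi}\xi^{-3/2}\,d\xi=\sqrt\lambda$ for $\lambda\ge0$ (integration by parts, using $\Gamma(\tfrac12)=\sqrt\pi$), tilting by $e^{-\xi}$ gives $\int_0^\infty(1-e^{u\xi})\mu(d\xi)=\sqrt{1-u}-1$ for all $u\le1$. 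Combined with $\int_0^\infty\xi\,\mu(d\xi)=\tfrac12$ (recorded in the text), this yields $R(u)=(1-u)-\sqrt{1-u}$ for $u\le1$; in particular $R(1)=0$ and $R(u)<0$ for $0<u<1$.

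For the local martingale property I would apply the change-of-variables formula to $e^{X_t}$ using the canonical representation \eqref{eq:X}. Since $X$ has paths of finite variation its continuous martingale part vanishes, so
\[ e^{X_t} = e^{X_0} + \int_0^t e^{X_{s-}}\,dX_s + \sum_{s\le t}e^{X_{s-}}\bigl(e^{\Delta X_s}-1-\Delta X_s\bigr). \]
Splitting $dX_s$ into its martingale part $\xi\,\overline J(\omega,X_{s-},d\xi,ds)$ and drift $-\tfrac12 X_{s-}\,ds$, and compensating the jump sum against $\nu(\omega,d\xi,ds)=X_{s-}\mu(d\xi)\,ds$, the two drift contributions add to $\int_0^t e^{X_{s-}}X_{s-}\bigl(-\tfrac12+\int_0^\infty(e^\xi-1-\xi)\mu(d\xi)\bigr)\,ds=\int_0^t e^{X_{s-}}X_{s-}R(1)\,ds=0$, while the surviving term $\int e^{X_{s-}}(e^\xi-1)\,\overline J(\omega,X_{s-},d\xi,ds)$ is a well-defined local martingale because $X$ is conservative, hence locally bounded, and $\int_0^\infty(e^\xi-1)\mu(d\xi)=1<\infty$. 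Thus $e^X$, and hence $S=e^X-1$, is a local martingale.

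For the transform I would use the affine structure: by the affine transform formula of \cite{DFS2003} (valid on the real domain up to the blow-up time of the associated generalized Riccati equation), $\E{e^{uX_t}}=\exp\bigl(\phi(t,u)+\psi(t,u)X_0\bigr)$ with $\partial_t\phi=F(\psi)$, $\phi(0,\cdot)=0$ and $\partial_t\psi=R(\psi)$, $\psi(0,u)=u$; here the state-independent exponent $F$ vanishes since $\mathcal{A}$ has no constant drift or jump part, so $\phi\equiv0$. It then remains to solve $\partial_t\psi=(1-\psi)-\sqrt{1-\psi}$, $\psi(0,u)=u$. With the substitution $W(t)=w(\psi(t,u))=1-\sqrt{1-\psi(t,u)}$ one has $1-\psi=(1-W)^2$, hence $R(\psi)=(1-W)^2-(1-W)=-W(1-W)$ and $\partial_t\psi=2(1-W)\,\partial_t W$, so the equation collapses to $\partial_t W=-\tfrac12 W$; thus $W(t)=w(u)e^{-t/2}$, and unwinding gives $\sqrt{1-\psi(t,u)}=1-w(u)e^{-t/2}$, i.e.\ $\psi(t,u)=1-\bigl(w(u)e^{-t/2}-1\bigr)^2$. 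For $u<1$ this stays strictly below $1$ for every $t\ge0$, so the Riccati solution is global and the formula holds for all $t$.

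The one point I expect to require care is the validity of the transform on the \emph{real} interval $u<1$ (equivalently, upgrading the relevant nonnegative local martingale to a true martingale). A self-contained route: repeating the computation of the second paragraph with $g(s,x)=\exp(\psi(t-s,u)x)$ in place of $e^x$ shows that $s\mapsto\exp(\psi(t-s,u)X_s)$ is a nonnegative, hence super-, local martingale on $[0,t]$, giving $\E{e^{uX_t}}\le\exp(\psi(t,u)X_0)$; for $0<u<1$ the same computation applied to $e^{u'X}$ with $u'\in(u,1)$ shows $e^{u'X}$ is a nonnegative supermartingale, so the supermartingale maximal inequality yields $\PP(\sup_{s\le t}X_s\ge a)\le e^{u'(X_0-a)}$, whence $\exp(\psi(t-s,u)X_s)\le\exp(u\sup_{s\le t}X_s)$ is dominated by an integrable random variable and dominated convergence upgrades the local martingale to a true one, giving equality; the cases $u\le0$ are immediate since then $\psi\le0$ and the process is bounded by $1$. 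Everything else is a routine (if slightly fiddly) computation.
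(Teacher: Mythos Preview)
Your proof is correct and follows essentially the same route as the paper: compute $R(u)=(1-u)-\sqrt{1-u}$, apply It\^o's formula to see that $\exp(g(T-t,u)X_t)$ is a local martingale whenever $g$ solves the generalized Riccati equation $\partial_t g=R(g)$ (in particular $g\equiv 1$ gives the local martingale property of $e^X$), solve the Riccati ODE explicitly, and then upgrade the local martingale to a true martingale for $u<1$. The only difference is in the last step: the paper uses de la Vall\'ee--Poussin with test function $f(x)=x^{1/(1-\epsilon)}$ and the bound $\E{e^{X_\tau}}\le e^{X_0}$, whereas you dominate by $\exp(u\sup_{s\le t}X_s)$ and control the latter via the supermartingale maximal inequality applied to $e^{u'X}$ for some $u'\in(u,1)$ --- both arguments rest on the same supermartingale property and are equally valid. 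Your substitution $W=1-\sqrt{1-\psi}$ linearizing the ODE is a nice touch; the paper simply states the solution and leaves verification to the reader.
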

Indeed, if the Lemma is true, then by monotone convergence
\[\E{S_t} = \E{e^{X_t}} - 1 = \lim_{u \to 1} \E{e^{u X_t}} - 1 = \exp\left(X_0 e^{-t/2}(2 - e^{-t/2})\right) - 1.\]
As a local martingale with non-constant expectation $S$ must then be a strict local martingale. 
\begin{proof}[Proof of Lemma~\ref{lem:mgf}] The main step is to compute the moment generating function $\E{e^{u X_t}}$; the local martingale property of $S$ will be obtained on the way. We could apply here directly the results of \cite{DFS2003} and \cite{KMayerhofer2013} but it is more instructive to do the computations explicitly. Let $T > 0$ and let $g: [0,T] \times (-\infty,1] \to (-\infty,1], (t,u) \mapsto g(t,u)$ be a function which for each $u \in (-\infty,1]$ is continuously differentiable in its first argument and which satisfies the initial condition $g(0,u) = u$. We will denote the time derivative of $g(t,u)$ by $\partial_t \, g(t,u)$. Applying Ito's formula for jump processes (cf. \cite{Protter2004}) to $M_t := \exp(g(T-t,u) X_t)$ yields
\begin{align*}
M_t &= M_0 + \text{loc. mg.}  - \int_0^t \partial_t \, g(T-s,u) X_{s-} M_{s-} ds - \frac{1}{2} \int_0^t g(T-s,u) X_{s-} M_{s-} ds + \\
&+ \sum_{s \le t} M_{s-} \left(e^{\Delta X_s g(T-s,u)} - 1 - \Delta X_s g(T-s,u)\right) = \\
&= M_0 + \text{loc. mg.} + \\
&+ \int_0^t \left(\int_0^\infty \left( e^{g(T-s,u)\xi} - 1 - g(T-s,u)\xi \right)\mu(d\xi) - \frac{1}{2} g(T-s,u) - \partial_t \, g(T-s,u) \right)X_{s-} M_{s-} ds,
\end{align*}
where \textit{loc. mg.} denotes a local martingale starting at zero which may change from line to line. Introducing the function
\begin{equation}
R(u) := \int_0^\infty \left(e^{u\xi} - 1 - u \xi\right)\mu(d\xi) - \frac{1}{2} u
\end{equation}
the above equation can be rewritten as
\begin{equation}\label{eq:M_simple}
M_t - M_0 = \text{loc. mg.} + \int_0^t \left(R(g(T-s,u)) - \partial_t \, g(T-s,u)\right) X_{s-} M_{s-} ds.
\end{equation}
In our example the function $R(u)$ is well-defined for all $u \in (-\infty,1]$ and can be computed explicitly by partial integration and reduction to a Gamma-type integral (cf. Lemma~\ref{lem:integral}), yielding
\[R(u) = (1-u)  - \sqrt{1-u}, \qquad u \in (-\infty,1].\]
We conclude from \eqref{eq:M_simple} that if $g$ is a function that is bounded above by $1$ and satisfies the ODE
\begin{equation}\label{Eq:Riccati}
\partial_t \, g(t,u) = R(g(t,u)), \qquad g(0,u) = u
\end{equation}
then $M_t =  \exp(g(T-t,u) X_t)$ is a local martingale for any $T > 0$. These conditions are in particular satisfied by the constant function $g_+(t,1) = 1$ and we conclude that 
$e^{X_t}$  -- and hence also $S_t$  -- is a local martingale. Since $e^X$ is non-negative it is a supermartingale and it follows that $\E{e^{X_\tau}} \le \E{e^{X_0}} < \infty$ for all finite stopping times $\tau \ge 0$.

Consider now the case when $g$ satisfies \eqref{Eq:Riccati} and is in addition bounded above by $(1 - \epsilon)$ for some $\epsilon > 0$. Setting $f(x) = x^{1/(1- \epsilon)}$ we see that 
\[\E{f(M_\tau)} = \E{f(\exp(g(T-\tau,u)X_\tau))} \le \E{f(\exp((1 - \epsilon)X_\tau))} = \E{e^{X_\tau}} \le \E{e^{X_0}} < \infty\]
for all stopping times $\tau \le T$. It follows from de la Vall\'ee-Poussin's theorem that the family 
\[\set{M_\tau: \tau\;\text{stopping time}, \quad \tau \le T}\] 
is uniformly integrable for any fixed $T > 0$ and hence that $M$ is a true martingale. 
It is easy to verify, that for $g(0,u) = u < 1$ the unique solution of the differential equation \eqref{Eq:Riccati} is given by
\[g(t,u) = 1 - \left(w(u)e^{-t/2} - 1\right)^2.\]
Clearly $g(t,u)$ is bounded uniformly away from $1$. We conclude that for $u < 1$ the process $M_t = e^{g(T-t,u)X_t}$ is a true martingale and hence
\[\E{e^{uX_T}} = \E{M_T} = M_0 = e^{g(T,u)X_0},\]
completing the proof.
\end{proof}
\begin{remark}
The reader familiar with affine processes recognizes in \eqref{Eq:Riccati} the \emph{generalized Riccati equation} (cf. \cite[Sec.~6]{DFS2003}) that is associated to $X$.
\end{remark}
\begin{remark}
Note that 
\[g_-(t,1) := \lim_{u \to 1-} g(t,u) = e^{-t/2}(2 - e^{-t/2})\]
is a solution of \eqref{Eq:Riccati} with initial condition $u = 1$ and the constant solution $g_+(t,1) \equiv 1$ is another one. This indicates that the strict local martingale property is related to the non-uniqueness of solutions to \eqref{Eq:Riccati} for the initial value $u = 1$, which will be confirmed in Theorem~\ref{thm:integrability}.
\end{remark}


\section{Generalizations}
Let $\mu$ be a non-zero L\'evy measure on $\Rplus$ that satisfies $\int_1^\infty e^\xi \, \mu(d\xi) < \infty$. As above, we define a c\`adl\`ag Feller process $X$ with state space $\Rplus$ through its generator
 \begin{equation}\label{eq:generator}
 \mathcal{A}f(x) = - b x f'(x) + x \int_0^\infty \left(f(x + \xi) - f(x) - f'(x)\xi\right)\mu(d\xi),
 \end{equation}
where $b = \int_0^\infty (e^\xi - 1 - \xi) \mu(d\xi) > 0$ and $f$ from $C_c^\infty(\Rplus)$, the core of $\mathcal{A}$. We assume the starting point $X_0 > 0$ to be deterministic and remark that the finiteness of $b$ is guaranteed by the integrability condition on the measure $\mu(d\xi)$. As in Section~\ref{Sec:example}, existence and non-explosion of the process $X$ are guaranteed by the results of \cite{DFS2003}. We write $J(\omega,d\xi,ds)$ for the random measure associated to the jumps of $X$ and 
\begin{equation}\label{eq:J}
\overline J(\omega,X_{s-}, d\xi,ds) := J(\omega,d\xi,ds) - X_{s-} \mu(d\xi)ds.
\end{equation}
for its compensated version. By the same arguments as in Section~\ref{Sec:example}, $X$ satisfies a stochastic integral equation
 \begin{equation}\label{eq:X_general}
 X_t = X_0  + \int_{\Rplus \times [0,t]} \xi \, \overline{J}(\omega,X_{s-}, d\xi,ds) - b \int_0^t X_{s-} ds.
 \end{equation}
where $b = \int_0^\infty (e^\xi - 1 - \xi) \mu(d\xi) > 0$. Finally, we define 
\begin{equation}\label{eq:R_def}
R(u) = \int_0^\infty \left(e^{u\xi}  - 1  - u\xi\right)\mu(d\xi) - bu.
\end{equation}
Note that $R(1) = R(0) = 0$ holds and $R$ is a strictly convex function, such that $R(u) < 0$ for all $u \in (0,1)$. Moreover, $R$ is continuously differentiable on $(-\infty,1)$ and $R'(0) = -b < 0$.\\

The next result is `essentially known' in the literature on affine processes and parts of it can be found in \cite[Thm~2.5]{K2008a}, \cite[Rem.~4.5.iv]{Mayerhofer2011} and \cite[Thm.~3.2]{KMayerhofer2013}. We give a self-contained and simple proof that does not need prerequisites from the theory of affine processes.

\begin{theorem}\label{thm:integrability}
Let $R$ be given by \eqref{eq:R_def}. Then $S = e^X - 1$ is a local martingale and the following are equivalent:
\begin{enumerate}[(a)]
\item $S = e^X - 1$ is a strict local martingale
\item The ordinary differential equation 
\begin{equation}\label{Eq:Riccati_repeat}
\partial_t \, g(t) = R(g(t)), \qquad g(0) = 1
\end{equation}
has more than one solution taking values in $(-\infty,1]$.
\item The function $1/R(u)$ is integrable in a left neighborhood of $u=1$, i.e. there exists $\epsilon > 0$ such that 
\begin{equation}\label{eq:osgood}
-\int_{1 - \epsilon}^1\frac{d \eta}{R(\eta)} < \infty.
\end{equation}
\end{enumerate}
\end{theorem}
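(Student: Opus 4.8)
The plan is to prove \textbf{(b)}$\Leftrightarrow$\textbf{(c)} by an Osgood-type argument and then to prove \textbf{(b)}$\Rightarrow$\textbf{(a)} and $\neg$\textbf{(b)}$\Rightarrow\neg$\textbf{(a)} by reusing the It\^o--de~la~Vall\'ee--Poussin machinery from the proof of Lemma~\ref{lem:mgf}. First I record the facts about $R$ that will be used: the integrability hypothesis $\int_1^\infty e^\xi\mu(d\xi)<\infty$ makes $R$ well defined, finite and continuous on all of $(-\infty,1]$ with $R(1)=0$ (dominated convergence), and together with $R(0)=0$, strict convexity and $R'(0)=-b<0$ this yields $R<0$ on $(0,1)$, $-R(\eta)\sim b\eta$ as $\eta\downarrow 0$, and $-R$ vanishing on $[0,1]$ only at the two endpoints. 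Exactly as in the proof of Lemma~\ref{lem:mgf}, applying It\^o's formula to $\exp(\gamma(S-t)X_t)$ for any $C^1$ function $\gamma:[0,S]\to(-\infty,1]$ solving $\gamma'=R(\gamma)$ shows that this process is a local martingale on $[0,S]$; taking $\gamma\equiv 1$ shows $e^X$, hence $S$, is a local martingale, and being non-negative $e^X$ is a supermartingale, so $\E{e^{X_\tau}}\le e^{X_0}$ for every finite stopping time $\tau$. If in addition $\sup_{[0,S]}\gamma\le 1-\epsilon<1$, the de~la~Vall\'ee--Poussin argument of Lemma~\ref{lem:mgf} (comparison with $f(x)=x^{1/(1-\epsilon)}$, using $\E{e^{X_\tau}}<\infty$) upgrades $\exp(\gamma(S-t)X_t)$ to a true martingale on $[0,S]$.

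For \textbf{(b)}$\Leftrightarrow$\textbf{(c)}: the constant $g\equiv 1$ always solves \eqref{Eq:Riccati_repeat}, so the issue is the existence of a second solution with values in $(-\infty,1]$. If \eqref{eq:osgood} holds, then $\Psi(v):=\int_v^1 d\eta/(-R(\eta))$ defines a continuous strictly decreasing bijection of $(0,1]$ onto $[0,\infty)$ --- finiteness near $\eta=1$ is \eqref{eq:osgood}, and divergence as $v\downarrow 0$ follows from $-R(\eta)\sim b\eta$ --- and its inverse $g_-:=\Psi^{-1}$ is a strictly decreasing $C^1$ solution of \eqref{Eq:Riccati_repeat} with $g_-(0)=1$ and values in $(0,1]$, so \textbf{(b)} holds. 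Conversely, if \eqref{eq:osgood} fails, let $g$ be a solution with values in $(-\infty,1]$ and $g(0)=1$, and suppose $g\not\equiv 1$; shifting time to the last instant at which $g=1$ we may assume $g<1$ on some interval $(0,\tau)$, and since $g'=R(g)<0$ while $g\in(0,1)$ the function $g$ is strictly decreasing there, so separation of variables gives $\int_{g(t)}^{g(s)}d\eta/(-R(\eta))=t-s$ for $0<s<t<\tau$; letting $s\downarrow 0$ yields $\int_{g(t)}^{1}d\eta/(-R(\eta))=t<\infty$, contradicting the divergence that holds when \eqref{eq:osgood} fails. Hence $g\equiv 1$ and \textbf{(b)} fails.

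For \textbf{(b)}$\Rightarrow$\textbf{(a)}: take the solution $g_-$ just constructed (legitimate since \textbf{(b)}$\Leftrightarrow$\textbf{(c)}); it is strictly decreasing with $g_-(t)<1$ for $t>0$. Fix $T>0$ and set $M_t=\exp(g_-(T-t)X_t)$, a local martingale on $[0,T]$; on each interval $[0,T-\delta]$ its exponent is $\le g_-(\delta)<1$, so by the first paragraph $M$ is a true martingale there and $\E{M_{T-\delta}}=M_0=\exp(g_-(T)X_0)$. Since $g_-(\delta)\to 1$ and $X_{T-\delta}\to X_T$ almost surely (a fixed time is a.s.\ not a jump time of $X$), $M_{T-\delta}\to e^{X_T}$ a.s., and Fatou's lemma gives $\E{e^{X_T}}\le\exp(g_-(T)X_0)<e^{X_0}$ because $g_-(T)<1$ and $X_0>0$. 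Thus $\E{S_T}=\E{e^{X_T}}-1<S_0$, so $S$ is not a martingale and is therefore a strict local martingale.

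For $\neg$\textbf{(b)}$\Rightarrow\neg$\textbf{(a)}: assume $g\equiv 1$ is the only solution of \eqref{Eq:Riccati_repeat} with values in $(-\infty,1]$. For $u\in(0,1)$ the solution $\psi(\cdot,u)$ of $\partial_t\psi=R(\psi)$, $\psi(0,u)=u$, decreases and stays in $(0,u]$ (it cannot cross the equilibrium $0$, where $R$ is Lipschitz), hence is $\le u<1$, so by the first paragraph $\exp(\psi(T-t,u)X_t)$ is a true martingale on $[0,T]$ and $\E{e^{uX_T}}=\exp(X_0\psi(T,u))$. By monotonicity of the flow the limit $\psi(T,1^-):=\lim_{u\uparrow 1}\psi(T,u)\in(0,1]$ exists, so monotone convergence yields $\E{e^{X_T}}=\exp(X_0\psi(T,1^-))$; and writing $\psi(T,u)=u+\int_0^T R(\psi(s,u))\,ds$ and letting $u\uparrow 1$ --- dominated convergence applies since $\psi(s,u)\in(0,1)$ and $R$ is bounded on $[0,1]$ --- shows $\psi(T,1^-)=1+\int_0^T R(\psi(s,1^-))\,ds$, i.e.\ $T\mapsto\psi(T,1^-)$ is a $C^1$ solution of \eqref{Eq:Riccati_repeat} with values in $(0,1]$. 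By the uniqueness assumption it is $\equiv 1$, so $\E{e^{X_T}}=e^{X_0}$ for all $T$; a non-negative supermartingale with constant expectation is a martingale, so $e^X$, hence $S$, is a true martingale. Combining the three implications gives \textbf{(a)}$\Leftrightarrow$\textbf{(b)}$\Leftrightarrow$\textbf{(c)}. The step I expect to be the main obstacle is this last one --- specifically, checking that the pointwise limit $\psi(\cdot,1^-)$ is again a solution of the generalized Riccati equation; this is exactly where the hypothesis $\int_1^\infty e^\xi\mu(d\xi)<\infty$ is needed, since it is what keeps $R$ finite and continuous up to $u=1$, hence bounded on $[0,1]$, so that dominated convergence can be applied. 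The Osgood equivalence and the Fatou argument are routine by comparison.
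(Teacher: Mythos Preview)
Your proof is correct and follows the paper's overall architecture: Osgood for \textbf{(b)}$\Leftrightarrow$\textbf{(c)}, and the It\^o/de~la~Vall\'ee--Poussin machinery of Lemma~\ref{lem:mgf} for the probabilistic equivalences. The tactical choices differ in two places. For \textbf{(b)}$\Rightarrow$\textbf{(a)} the paper uses a pure supermartingale comparison: with $M^+_t=e^{X_t}$ and $M^-_t=\exp(g_-(T-t)X_t)$ both local martingales with the same terminal value but $M_0^+>M_0^-$, the assumption that $M^+$ is a true martingale gives $M_0^+=\E{M_T^+}=\E{M_T^-}\le M_0^-$, a contradiction; this avoids your restriction to $[0,T-\delta]$, the Fatou step, and the observation that $T$ is a.s.\ not a jump time. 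For $\neg$\textbf{(a)} the paper argues from $\neg$\textbf{(c)} rather than $\neg$\textbf{(b)}: it shows directly that $\lim_{u\uparrow 1}g(t,u)=1$ by noting that a strictly smaller $\liminf$ would, after separating variables, make \eqref{eq:osgood} finite. Your route---identifying $\psi(\cdot,1^-)$ as a solution of \eqref{Eq:Riccati_repeat} via the integral equation and then invoking uniqueness---is an equally valid way to reach the same conclusion, and you are right that continuity of $R$ up to $u=1$ (hence boundedness on $[0,1]$) is exactly what justifies the dominated-convergence step.
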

\begin{remark}
The equivalence of (b) and (c) is closely related to Osgood's criterion for non-uniqueness of autonomous scalar ordinary differential equations, see \cite{Osgood1898}.
\end{remark}
\begin{remark}
It is interesting to compare this theorem with known results for diffusion processes. Integral conditions similar to \eqref{eq:osgood} are known to characterize the strict local martingale property of one-dimensional diffusions, see e.g. \cite{engelbert1990functionals}, \cite{blei2009exponential}, \cite[Thm.~1.6]{Delbaen2002} and \cite[Cor.~4.3]{Mijatovic2012}. The strict local martingale property of general diffusion processes has been linked to non-uniqueness of solutions of the associated Kolmogorov PDE in e.g. \cite[Sec.~9.5]{Lewis2000}, \cite{Heston2007} and \cite{Bayraktar2012}.
\end{remark}

\begin{proof}
The local martingale property follows directly by applying Ito's formula, as in the proof of Lemma~\ref{lem:mgf}. 

We proceed to show the analytic part of the theorem, that is, the equivalence of (b) and (c). To see that (b) implies (c) consider the case that \eqref{Eq:Riccati_repeat} has multiple solutions taking values in $(-\infty,1]$. Since the constant function $g_+(t) \equiv 1$ is clearly a solution, there must be another solution, $g_-$ and a time point $t_1 > 0$ such that $g_-(t_1) = 1 - \epsilon$, for some $\epsilon > 0$. Moreover, set $t_0 = \sup \set{t \ge 0: g_-(t) = 1}$ and note that by continuity of $g_-$ it holds that $g_-(t_0) = 1$ and $t_0 < t_1$. Taking into account the properties of $R$, it follows that $R(g_-(t)) \neq 0$ for $t \in  (t_0,t_1]$ and we may integrate \eqref{Eq:Riccati_repeat} to obtain
\[-\int_{g_-(t_1)}^{g_-(t)} \frac{d\eta}{R(\eta)} = - \int_{t_1}^t \frac{g_-(s)\,ds}{R(g_-(s))} = t_1 - t\]
for all $t \in (t_0,t_1]$. Letting $t$ tend to $t_0$ this becomes
\[-\int_{1 - \epsilon}^0 \frac{d\eta}{R(\eta)} = t_1 - t_0 < \infty\]
and we have shown (c).

To show the reverse implication, assume that \eqref{eq:osgood} holds, and note that due to convexity of $R$
\[-\int_0^1 \frac{d\eta}{R(\eta)} \ge -\frac{1}{ R'(0)}\int_0^1 \frac{d\eta}{\eta} = +\infty.\]
Hence, $T(x) := -\int_x^1{d\eta/R(\eta)}$ defines a continuously differentiable, strictly decreasing function that maps $(0,1]$ onto $[0,\infty)$. By the inverse function theorem, there exists a strictly decreasing and continuously differentiable inverse function $g$, mapping $[0,\infty)$ to $(0,1]$ with $g(0) = 1$ and which satisfies $T(g(t)) = t$, i.e.
\[-\int_{g(t)}^1 \frac{d\eta}{R(\eta)} = t, \qquad t \ge 0.\]
Differentiating on both sides shows that $g$ solves \eqref{Eq:Riccati_repeat}. But $g$ is different from the obvious constant solution $g_+(t) \equiv 1$ and it follows that  \eqref{Eq:Riccati_repeat} has multiple solutions taking values in $(-\infty,1]$, showing (b).

We now turn to the implication from (b) to (a): Denote by $g_+(t) \equiv 1$ the constant solution to \eqref{Eq:Riccati_repeat} and let $g_-(t)$ be another solution taking values in $(-\infty,1]$. Since $g_-$ is different from $g_+$ there exists a $T > 0$ such that $g_-(T) < 1$. We set
\[M_t^+ = e^{X_t}, \qquad \text{and} \qquad M_t^- = e^{g_-(T-t) X_t}\]
for $t \in [0,T]$. Both processes have the same terminal value $M_T^+ = M_T^-$, but different initial values $M_0^+ > M_0^-$. Applying Ito's formula just as in the proof of Lemma~\ref{lem:mgf} we find that both $M^+$and $M^-$ are local martingales and -- being positive -- also supermartingales. Assume for a contradiction that $S = e^X - 1$ is a true martingale. Then also $M^+$ is a true martingale, and 
\[M_0^+ = \E{M_T^+} = \E{M_T^-} \le M_0^-\]
in contradiction to $M_0^+ > M_0^-$. We conclude that $M^+$ and hence also $S$ is a strict local martingale and (a) follows.

Finally we show by contraposition that (a) implies (c). Assume that (c) does not holds true and consider the differential equation
\begin{equation}\label{Eq:Riccati_again}
\partial_t \, g(t,u) = R(g(t,u)), \qquad g(0,u) = u, \qquad u \in (0,1)
\end{equation}
Since $R(0) = R(1) = 0$ and $R$ is continuously differentiable and negative on $(0,1)$ this equation has a unique strictly decreasing solution $g(t,u)$ for initial values $u \in (0,1)$. Fixing an arbitrary $t > 0$ and integrating \eqref{Eq:Riccati_again} this solution satisfies 
\[-\int_{g(t,u)}^u \frac{d\eta}{R(\eta)} = t.\]
Assume that $\liminf_{u \to 1-} g(t,u) = 1- \epsilon$ for some $\epsilon > 0$. Taking this limit in the equation above yields
\[-\int_{1 - \epsilon}^1 \frac{d\eta}{R(\eta)} = t < \infty,\]
but we have started with the assumption that \eqref{eq:osgood} does \emph{not} holds true. We conclude that $\liminf_{u \to 1-} g(t,u) = 1$ and hence in fact $\lim_{u \to 1-} g(t,u) = 1$ for all $t \ge 0$. Now fix some $T > 0$ and set 
\[M_t^u = \exp\left(g(T-t,u)X_t\right), \qquad u \in (0,1).\]
Proceeding by Ito's formula as in the proof of Lemma~\ref{lem:mgr} we see that each $M^u$ is a local martingale. Moreover, $g(t,u) \le u$ and setting $f_u(x) = x^{1/u}$ we obtain
\[\E{f_u(M^u_\tau)} = \E{f_u(\exp(g(T-\tau,u) X_\tau))} \le \E{f_u(\exp(u X_\tau))} = \E{e^{X_\tau}} \le \E{e^{X_0}} < \infty\]
for any stopping time $\tau \le T$. For $u \in (0,1)$ the function $f_u$ is convex and satisfies $\lim_{x \to \infty} f_u(x)/x = \infty$. Hence, by de la Vall\'ee-Poussin's theorem the family \[\set{M^u_\tau: \tau\;\text{stopping time}, \quad \tau \le T}\] 
is uniformly integrable for any $T >0$ and thus $M^u$ is a true martingale. Using monotone convergence we obtain
\[\Econd{e^{X_t}}{\F_s} = \Econd{\lim_{u \to 1-} e^{g(T-t,u)X_t}}{\F_s} = \lim_{u \to 1-} \Econd{M_t^u}{\F_s} = \lim_{u \to 1-} M_s^u = e^{X_s}\]
for all $0 \le s \le t \le T$. This shows that $e^X$ and hence $S$ is a true martingale and completes the proof.
\end{proof}

\begin{theorem}\label{thm:slowly}Let the measure $\mu$ be given by 
\begin{equation}
\mu(d\xi) = c e^{-\xi} \xi^{-\alpha} \ell(\xi)\,d\xi, \qquad \xi \ge 0
\end{equation}
where $c >0$, $\alpha \in (1,2)$, and $\ell$ is slowly varying at infinity and bounded at zero. Define the associated Poisson random measure and the process $X$ as in \eqref{eq:J} and \eqref{eq:X_general}. Then the process $S = e^X - 1$ is a strict local martingale.
\end{theorem}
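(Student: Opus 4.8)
The plan is to reduce the statement to criterion (c) of Theorem~\ref{thm:integrability} and then analyse the behaviour of $R$ near $u=1$. First one checks that $\mu$ is an admissible L\'evy measure: since $\ell$ is bounded at zero and $\alpha\in(1,2)$, the integral $\int_0^\infty(\xi^2\wedge 1)\,\mu(d\xi)$ is finite, and since $\ell$ is slowly varying and $\alpha>1$, Karamata's theorem gives $\int_1^\infty e^{\xi}\,\mu(d\xi)=c\int_1^\infty \xi^{-\alpha}\ell(\xi)\,d\xi<\infty$; so the standing hypotheses of Section~3 hold and Theorem~\ref{thm:integrability} applies, $S=e^X-1$ being a local martingale. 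It then remains to show that $-\int_{1-\epsilon}^1 d\eta/R(\eta)$ is finite for some $\epsilon>0$. Writing $\eta=1-s$ and recalling $R<0$ on $(0,1)$, this is exactly the assertion that $s\mapsto 1/|R(1-s)|$ is integrable on a right neighbourhood of $s=0$, and this will follow as soon as we prove that $|R(1-s)|$ is regularly varying at $0^{+}$ with index $\alpha-1$: since $\alpha-1\in(0,1)$, the function $1/|R(1-s)|$ is then regularly varying with index $1-\alpha>-1$ and hence Lebesgue integrable near the origin.

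Next I would compute the near-$1$ asymptotics of $R$. Using $R(1)=0$ and the definition $b=\int_0^\infty(e^{\xi}-1-\xi)\,\mu(d\xi)$, a short rearrangement of \eqref{eq:R_def} gives, for $u=1-s$ with $s\in(0,1)$,
\[ R(1-s)=\int_0^\infty e^{\xi}\bigl(e^{-s\xi}-1+s(1-e^{-\xi})\bigr)\mu(d\xi)=c\int_0^\infty\bigl(e^{-s\xi}-1+s(1-e^{-\xi})\bigr)\xi^{-\alpha}\ell(\xi)\,d\xi, \]
where the weight $e^{\xi}$ exactly cancels the factor $e^{-\xi}$ in $\mu$, and the bracketed integrand is $O(\xi^2)$ as $\xi\to 0$, so the integral converges despite $\xi^{-\alpha}$ being non-integrable at the origin. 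I would then split this as $R(1-s)=-\psi(s)+cKs$, where
\[ \psi(s):=c\int_0^\infty(1-e^{-s\xi})\xi^{-\alpha}\ell(\xi)\,d\xi,\qquad K:=\int_0^\infty(1-e^{-\xi})\xi^{-\alpha}\ell(\xi)\,d\xi, \]
and $K\in(0,\infty)$ because $1-\alpha>-1$ and $\ell$ is bounded near $0$, while $\alpha>1$ with $\ell$ slowly varying controls the tail.

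The heart of the proof is the asymptotics of $\psi$ as $s\to 0^{+}$. Writing $1-e^{-s\xi}=\int_0^\xi s e^{-sy}\,dy$ and applying Fubini yields $\psi(s)=s\int_0^\infty e^{-sy}\,\overline\rho(y)\,dy$ with $\overline\rho(y):=c\int_y^\infty z^{-\alpha}\ell(z)\,dz$, which is finite for every $y>0$, non-increasing, and locally integrable on $(0,\infty)$ with $\int_0^1\overline\rho(y)\,dy<\infty$ (again using boundedness of $\ell$ near $0$ and $\alpha>1$). By Karamata's theorem $\overline\rho(y)\sim\frac{c}{\alpha-1}y^{1-\alpha}\ell(y)$ as $y\to\infty$, so the monotone-density form of Karamata's Tauberian theorem gives $\int_0^\infty e^{-sy}\overline\rho(y)\,dy\sim\frac{c\,\Gamma(2-\alpha)}{\alpha-1}s^{\alpha-2}\ell(1/s)$ and hence $\psi(s)\sim\frac{c\,\Gamma(2-\alpha)}{\alpha-1}s^{\alpha-1}\ell(1/s)$. (Self-containedly, the same conclusion follows by the substitution $\xi=t/s$ and dominated convergence after splitting the integral at $\xi=1$, using Potter's bounds to dominate the tail and boundedness of $\ell$ near $0$ for the small-$\xi$ piece.) Since $\alpha<2$, the remainder $cKs$ is of strictly smaller order than $\psi(s)$, so $R(1-s)=-\psi(s)+cKs\sim-\frac{c\,\Gamma(2-\alpha)}{\alpha-1}s^{\alpha-1}\ell(1/s)$. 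This exhibits $|R(1-s)|$ as regularly varying at $0^{+}$ with index $\alpha-1\in(0,1)$, so $-\int_{1-\epsilon}^1 d\eta/R(\eta)=\int_0^\epsilon ds/|R(1-s)|<\infty$, criterion (c) of Theorem~\ref{thm:integrability} holds, and that theorem yields that $S=e^X-1$ is a strict local martingale (consistently with the Section~2 example, where $\alpha=\tfrac32$, $\ell\equiv1$, $c=\tfrac1{2\sqrt\pi}$ reproduce $R(1-s)=s-\sqrt s$).

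The main obstacle is exactly the asymptotic analysis of $\psi$: one must transfer the regular variation of the density of $\mu$ at $\xi=\infty$ into regular variation of $R$ at $s=0$ while handling the \emph{infinite} mass of $\mu$ near $\xi=0$. This is why it is essential to first identify the algebraic cancellation that makes the integrand $O(\xi^2)$ at the origin, and then either to verify the monotonicity and local-integrability hypotheses needed for the Tauberian theorem or, in the elementary route, to set up the Potter-bound domination so that the small-$\xi$ contribution is provably negligible relative to $\ell(1/s)$.
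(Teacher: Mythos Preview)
Your proof is correct and follows essentially the same route as the paper. In fact your tail function $\overline\rho(y)=c\int_y^\infty z^{-\alpha}\ell(z)\,dz$ is exactly $-F(y)$ in the paper's notation (since $e^{\xi}\mu(d\xi)=c\,\xi^{-\alpha}\ell(\xi)\,d\xi$), and your Fubini step $\psi(s)=s\int_0^\infty e^{-sy}\overline\rho(y)\,dy$ reproduces the paper's partial-integration identity $R(1-z)=z\int_0^\infty e^{-z\xi}F(\xi)\,d\xi+z\int_0^\infty(e^\xi-1)\mu(d\xi)$; both then apply Karamata's Tauberian theorem in the Abelian direction to obtain the $s^{\alpha-1}\ell(1/s)$ asymptotic for $R(1-s)$ and conclude integrability of $1/|R|$ near $u=1$.
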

\begin{remark}
This theorem shows that the strict local martingale property of $S$ is related to the right tail of the jump measure and hence determined only by the large jumps of $S$.
\end{remark}
\begin{proof}
First note that $\mu$ is a L\'evy measure and satisfies the integrability condition $\int_1^\infty e^\xi \,\mu(d\xi) < \infty$, due to \cite[Prop~1.5.10]{Bingham1989}. This condition implies by \cite[Lem.~9.2]{DFS2003} that $X$ is a conservative process. Denote by $f \sim g$ asymptotic equivalence of functions; we will indicate whether the equivalence holds at $0$ or at $\infty$. We set 
\[F(\xi) =  - \int_\xi^\infty e^{x}\mu(dx).\] Using that $\alpha > 1$ it follows from \cite[Prop.~1.5.10]{Bingham1989} that 
\[F(\xi) \sim \tfrac{c}{\alpha - 1} \xi^{1 - \alpha} \ell(\xi)\qquad \text{as $\xi \to \infty$}.\] By partial integration we can rewrite $R$ as
\begin{equation*}
R(1 - z) = z \int_0^\infty e^{-z\xi} F(\xi) d\xi + z \int_0^\infty \left(e^\xi - 1 \right) \mu(d\xi).
\end{equation*}
By Karamata's Tauberian theorem (cf. \cite[Thm.1.7.1]{Bingham1989}) for the Laplace transform it follows that 
\[R(1 - z) \sim \frac{c}{\Gamma(\alpha)} \frac{\pi}{\sin(\pi(\alpha - 1))} z^{\alpha - 1} \ell(1/z)\qquad \text{as $z \to 0+$.}\]
Setting $\tilde \ell(x) = 1/\ell(x)$, which is also a slowly varying function, we obtain
\[\frac{1}{x^2R(1 - 1/x)} \sim \frac{\Gamma(\alpha)}{c} \frac{\sin(\pi(\alpha - 1))}{\pi} x^{\alpha-3} \tilde \ell(x), \qquad \text{as $x \to \infty$.}\]
By \cite[Prop.1.5.10]{Bingham1989} and using that $\alpha < 2$ it follows that the integral
\[-\int_{1 - \epsilon}^1 \frac{du}{R(u)} = \int_{1/ \epsilon}^\infty \frac{dx}{x^2 R(1 - 1/x)}\]
converges, which implies by Theorem~\ref{thm:integrability} that $S = e^X - 1$ is a strict local martingale.
\end{proof}
Particularly tractable examples can be produced from Theorem~\ref{thm:slowly} by choosing 
\[c = \frac{\sin(\pi (\alpha-1))}{\pi}\Gamma(\alpha) , \qquad \ell \equiv 1.\]
In this case we obtain from Lemma~\ref{lem:integral}
\[R(u) = (1 - u) - (1-u)^{\alpha - 1}.\]
The corresponding generalized Riccati equation \eqref{Eq:Riccati} can be solved explicitly for the initial value $u = 1$ and we obtain that $S = e^X - 1$ is a strict local martingale with expectation
\[\E{S_t} = \exp\left(X_0 g_-(t,1)\right) - 1, \quad \text{where} \quad g_-(t,1) = 1 - \left(1 - e^{(\alpha-2)t}\right)^{\tfrac{1}{2-\alpha}}.\]

\section{Alternative Construction \`a la Delbaen-Schachermayer}
We present an alternative construction of the strict local martingale $S$ from a true martingale $M$ by inversion and an absolutely continuous measure change. This technique is most familiar in the case of the inverse Bessel process of dimension~3 and has been used to show its strict local martingale property (see e.g.~\cite[Chapter~VI.3]{Revuz1999}). It has been generalized to continuous semimartingales by Delbaen and Schachermayer in \cite{Delbaen1995}. Our example shows that the technique can be used to produce discontinuous strict local martingales as well.

Define the measure
\begin{equation}
\tilde \mu(d\xi) = \frac{1}{2 \sqrt{\pi}} \xi^{-3/2}d\xi
\end{equation}
and note that $\tilde \mu$ is related to $\mu$ from \eqref{eq:mu} by $\mu(d\xi) = e^{-\xi} \tilde \mu(d\xi)$. We define $\tilde X$ as the Feller process with state space $[0,\infty]$ and with generator given by
\begin{equation}\label{eq:generator2}
\tilde{\mathcal{A}}f(x) = - x f'(x) + x \int_0^\infty \left(f(x + \xi) - f(x)\right)\tilde \mu(d\xi)
\end{equation}
for $f$ from the core $C_c^\infty(\Rplus)$. We assume that $\tilde X$ starts at a deterministic strictly positive initial point $\tilde X_0 > 0$. The process $\tilde X$ is a small modification of the non-conservative affine process $Y$ in \cite[Example~9.3]{DFS2003} by adding a negative drift. In fact it can be directly related to $Y$ by the transformation $\tilde X_t = \int_0^t e^{-(t-s)}dY_s$. It follows that also $\tilde X$ is non-conservative, but instead explodes to $+\infty$ at a predictable stopping time $\tau$ which is finite with positive $\mathbb{P}$-probability. Note that in contrast to the processes considered in the previous sections, $\tilde X$ is \emph{not} a semimartingale.

Obviously, the time of explosion $\tau$ is announced by the sequence of first hitting times
\begin{equation}\label{eq:taun}
\tau_n := \inf \{t \ge 0: \tilde X_t \ge n\}.
\end{equation}
Using Lemma~\ref{lem:integral} it is easy to check that $h(x) = e^{-x}$ is a harmonic function for $\tilde X$, i.e. $\tilde{\mathcal{A}}(e^{-x}) = 0$ and we conclude that 
\[M_t = \exp\left(- \Big(\tilde X_t - \tilde X_0\Big)\right)\]
is a bounded local martingale and hence a true martingale. Note that $M_t$ reaches the value zero exactly at $\tau$ and remains zero afterwards. We define a measure $\mathbb{Q}$, absolutely continuous with respect to $\mathbb{P}$, by setting
\begin{equation}\label{eq:measure_change}
\left.\frac{d\mathbb{Q}}{d\mathbb{P}}\right|_{\F_t} = M_t.
\end{equation}
As $M$ reaches zero with positive $\mathbb{P}$-probability the measures $\mathbb{Q}$ and $\mathbb{P}$ are not equivalent. In particular $\mathbb{Q}(\tau \le t) = 0$ for every $t \ge 0$ and hence
\[\mathbb{Q}(\tau = \infty) = \lim_{t \to \infty} \mathbb{Q}(\tau > t) = 1,\]
while $\mathbb{P}(\tau = \infty) < 1$.

\begin{theorem}
The process $L_t = \frac{1}{M_t}\Ind{t < \tau}$ is a strict local martingale under $\mathbb{Q}$.
\end{theorem}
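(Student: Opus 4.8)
The plan is to reproduce, in this jump setting, the classical Delbaen--Schachermayer inversion argument familiar from the inverse Bessel$(3)$ process: first show that $L$ is a $\mathbb{Q}$-local martingale by localizing along the announcing sequence $(\tau_n)$ from \eqref{eq:taun}, and then compute $\mathbb{E}^{\mathbb{Q}}[L_t]$ explicitly and observe that it is not constant in $t$, which is incompatible with the true martingale property.

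For the local martingale part I would fix $n$ and work with the stopped process $L^{\tau_n}_t := L_{t\wedge\tau_n}$. Since $\tau$ is announced by $(\tau_n)$ we have $\tau_n < \tau$, hence $t\wedge\tau_n < \tau$, so the explosion has not occurred at time $t\wedge\tau_n$, $M_{t\wedge\tau_n}=e^{-(\tilde X_{t\wedge\tau_n}-\tilde X_0)}$ is strictly positive, and $L^{\tau_n}_t = 1/M_{t\wedge\tau_n}$. Multiplying by the density process one gets $M_t\,L^{\tau_n}_t = M_t/M_{t\wedge\tau_n} = e^{-(\tilde X_t - \tilde X_{t\wedge\tau_n})}$, which is identically $1$ on $[0,\tau_n]$ and, on $[\tau_n,\infty)$, is a $\mathbb{P}$-martingale by the strong Markov property of the Feller process $\tilde X$ at $\tau_n$ together with the harmonicity $\tilde{\mathcal{A}}(e^{-x})=0$ established above. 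Granting that $M\,L^{\tau_n}$ is a $\mathbb{P}$-martingale, the relation $d\mathbb{Q}/d\mathbb{P}|_{\F_t}=M_t$ gives, for $A\in\F_s$ and $s\le t$, that $\mathbb{E}^{\mathbb{Q}}[L^{\tau_n}_t\mathbf{1}_A] = \mathbb{E}^{\mathbb{P}}[M_t L^{\tau_n}_t\mathbf{1}_A] = \mathbb{E}^{\mathbb{P}}[M_s L^{\tau_n}_s\mathbf{1}_A] = \mathbb{E}^{\mathbb{Q}}[L^{\tau_n}_s\mathbf{1}_A]$, so $L^{\tau_n}$ is a genuine $\mathbb{Q}$-martingale (note $\mathbb{E}^{\mathbb{Q}}|L^{\tau_n}_t| = \mathbb{E}^{\mathbb{P}}[M_t/M_{t\wedge\tau_n}]=1<\infty$). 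Finally $\tau_n\uparrow\tau$ and $\mathbb{Q}(\tau=\infty)=1$, so $\tau_n\to\infty$ $\mathbb{Q}$-a.s.\ and $(\tau_n)$ is a localizing sequence, whence $L$ is a $\mathbb{Q}$-local martingale.

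For the strict part I would use $\mathbb{Q}(\tau=\infty)=1$ and Bayes' rule to compute
\[
\mathbb{E}^{\mathbb{Q}}[L_t] = \mathbb{E}^{\mathbb{Q}}\!\Big[\tfrac{1}{M_t}\mathbf{1}_{\{t<\tau\}}\Big] = \mathbb{E}^{\mathbb{P}}\!\Big[M_t\cdot\tfrac{1}{M_t}\mathbf{1}_{\{t<\tau\}}\Big] = \mathbb{P}(\tau>t),
\]
which is non-increasing in $t$ with limit $\mathbb{P}(\tau=\infty)<1=L_0$, hence non-constant. A $\mathbb{Q}$-local martingale with non-constant mean cannot be a true martingale, so $L$ is a strict local martingale under $\mathbb{Q}$; moreover the identity $\mathbb{E}^{\mathbb{Q}}[L_t]=\mathbb{P}(\tau>t)$ exhibits the loss of mass quantitatively, in the spirit of the inverse Bessel$(3)$ example.

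The step I expect to be the main obstacle is the careful bookkeeping in the localization. Because $\mathbb{Q}$ is only absolutely continuous with respect to $\mathbb{P}$ (the density $M$ hits $0$), one cannot naively invert $M$ and transfer martingality; one must verify that each stopped product $M_t/M_{t\wedge\tau_n}=e^{-(\tilde X_t-\tilde X_{t\wedge\tau_n})}$ really is a $\mathbb{P}$-martingale, gluing the trivial behaviour before $\tau_n$ to the strong-Markov/harmonic behaviour after $\tau_n$ — for instance by conditioning $\mathbb{E}^{\mathbb{P}}[M_t/M_{t\wedge\tau_n}\mid\F_s]$ on the events $\{s\ge\tau_n\}$, $\{s<\tau_n\le t\}$ and $\{t<\tau_n\}$ and invoking optional sampling of the bounded martingale $M$ — before passing to $\mathbb{Q}$.
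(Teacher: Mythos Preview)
Your proposal is correct and follows essentially the same route as the paper: localize along the announcing sequence $(\tau_n)$, verify that each $L^{\tau_n}$ is a $\mathbb{Q}$-martingale via the change-of-measure relation, and then compute $\mathbb{E}^{\mathbb{Q}}[L_t]=\mathbb{P}(\tau>t)<1$ for large $t$ to conclude strictness. The only cosmetic difference is that the paper compresses the localization step into a single Bayes identity, whereas you check that $M_t L^{\tau_n}_t = M_t/M_{t\wedge\tau_n}$ is a $\mathbb{P}$-martingale; note that for this last point you do not actually need the strong Markov property, since optional sampling of the bounded $\mathbb{P}$-martingale $M$ already gives $\mathbb{E}^{\mathbb{P}}[M_t\mid\F_{t\wedge\tau_n}]=M_{t\wedge\tau_n}$ and hence $\mathbb{E}^{\mathbb{P}}[M_t/M_{t\wedge\tau_n}\mid\F_s]=1=M_s/M_{s\wedge\tau_n}$ on $\{s<\tau_n\}$, with the case $\{s\ge\tau_n\}$ being immediate.
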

\begin{remark}
Since $\mathbb{Q}(t < \tau) = 1$, it is correct to write $L_t = \frac{1}{M_t}$ $\mathbb{Q}$-a.s. However, this equality does \emph{not} hold true $\mathbb{P}$-a.s.
\end{remark}

\begin{proof}
We first show that $L$ is a local martingale by using $(\tau_n)_{n \in \mathbb{N}}$ from \eqref{eq:taun} as the localizing sequence. Note that $\tau_n \to \tau$ $\mathbb{P}$-a.s. and hence also $\mathbb{Q}$-a.s. Moreover $\mathbb{Q}(\tau = \infty) = 1$ such that $(\tau_n)$ is indeed a localizing sequence under $\mathbb{Q}$. Now
\begin{align*}
\Excond{\mathbb{Q}}{L_{t \wedge \tau_n}}{\F_s} &= \frac{1}{M_{s \wedge \tau_n}}\Excond{\mathbb{P}}{\frac{M_{t \wedge \tau_n}}{M_{t \wedge \tau_n}}\Ind{(t \wedge \tau_n) < \tau}}{\F_s} = \\ &=  \frac{1}{M_{s \wedge \tau_n}} = 
L_{s \wedge \tau_n}
\end{align*}
holds $\mathbb{Q}$-a.s. which shows that $L$ is a $\mathbb{Q}$-local martingale.
Finally, for sufficiently large $t$ we have
\[\Ex{\mathbb{Q}}{L_t} = \Ex{\mathbb{P}}{\frac{M_t}{M_t}\Ind{t < \tau}} = \mathbb{P}(\tau < t) < 1\]
and it follows that $L$ is not a true $\mathbb{Q}$-martingale.
\end{proof}
\begin{remark}
As mentioned, this construction parallels the well-known construction of the inverse Bessel process of dimension $3$ (cf. \cite[Chapter~VI.3]{Revuz1999}) that has been generalized by \cite{Delbaen1995}. In the construction of the $3$-dimensional inverse Bessel process the process $M_{t \wedge \tau}$ is a one-dimensional $\mathbb{P}$-Brownian motion starting at one and stopped upon hitting zero; under $\mathbb{Q}$ the process $M$ becomes a $3$-dimensional Bessel process and $1/M$, the inverse $3$-dimensional Bessel process, is a strict local martingale under $\mathbb{Q}$.
\end{remark}

To see that the above construction is equivalent to the construction from Section~\ref{Sec:example} note that the measure change \eqref{eq:measure_change} is nothing else than the $h$-transform of $\tilde X$ with respect to the harmonic function $h(x) = e^{-x}$ (see e.g. \cite{Palmowski2000}) and we can compute the generator of $\tilde X$ under $\mathbb{Q}$ from the formula
\begin{align*}
\tilde{\mathcal{A}}^{\mathbb{Q}} &= h^{-1} \tilde{\mathcal{A}} (fh) = -x (f'(x) - f(x)) + x \int_0^\infty \left(f(x+\xi)e^{-\xi} - f(x)\right) \tilde \mu(d\xi) = \\
&= x \int_0^\infty \left(f(x+\xi) - f(x) - f'(x)\xi\right)\mu(d\xi) + \\ &+ x f(x) \left\{1 - \int_0^\infty \left(1 - e^{\xi}\right) \mu(d\xi)\right\} + xf'(x)\left\{\int_0^\infty \xi \mu(d\xi)  - 1\right\} = \\ 
&= - \frac{x}{2}f'(x) + x \int_0^\infty \left(f(x+\xi) - f(x) - f'(x)\xi\right)\mu(d\xi)
\end{align*}
where we have used Lemma~\ref{lem:integral} to evaluate the integrals in the second-to-last line. This is exactly the generator of $X$ from \eqref{eq:X}, i.e. $\tilde X$ under $\mathbb{Q}$ is identical to the original process $X$ in the sense of Feller processes. Note that this connection via exponential measure change between non-conservativeness and strict local martingale property has also been exploited in \cite{Kallsen2008a} and \cite{Mayerhofer2011}.

\appendix

\section{An integration formula}

\begin{lemma}\label{lem:integral}
Let $\alpha \in (1,2)$ and set $C(\alpha) = \sin((\alpha-1)\pi) \Gamma(\alpha) / \pi$.
Then
\begin{align}
C(\alpha)\int_0^\infty \left(e^{u\xi} - 1\right)e^{-\xi} \xi^{-\alpha} d\xi &=  1 - (1-u)^{\alpha-1} \label{eq:gamma1}\\
\intertext{for all $u \le 1$, and}
C(\alpha)\int_0^\infty e^{-\xi} \xi^{1-\alpha} d\xi &= \alpha - 1. \label{eq:gamma2}
\end{align}
\end{lemma}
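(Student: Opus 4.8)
\textbf{Proof proposal for Lemma~\ref{lem:integral}.}

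The plan is to reduce both integrals to the classical Gamma integral $\int_0^\infty e^{-s\xi}\xi^{\beta-1}\,d\xi = \Gamma(\beta) s^{-\beta}$, valid for $\mathrm{Re}(\beta) > 0$ and $s > 0$, after first splitting the left-hand side of \eqref{eq:gamma1} appropriately so that each resulting integral converges near $\xi = 0$. The subtraction of $1$ inside $(e^{u\xi}-1)$ is exactly what is needed: since $\alpha \in (1,2)$, the factor $\xi^{-\alpha}$ is non-integrable at the origin, but $e^{u\xi}-1 = O(\xi)$ there, so the combined integrand is $O(\xi^{1-\alpha})$, which is integrable. I would therefore write
\[
\int_0^\infty \left(e^{u\xi}-1\right)e^{-\xi}\xi^{-\alpha}\,d\xi
= \int_0^\infty \left(e^{-(1-u)\xi} - e^{-\xi}\right)\xi^{-\alpha}\,d\xi,
\]
and then treat this as a limit of $\int_0^\infty (e^{-(1-u)\xi} - e^{-\xi})\xi^{\beta-1}\,d\xi$ as $\beta \downarrow 1-\alpha+1 = \ldots$; more cleanly, I would use Frullani's integral or, better, analytic continuation. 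The cleanest route: for $\mathrm{Re}(\beta)\in(0,1)$ one has $\int_0^\infty (e^{-a\xi}-e^{-b\xi})\xi^{\beta-1}\,d\xi = \Gamma(\beta)(a^{-\beta}-b^{-\beta})$; this extends to $\mathrm{Re}(\beta)\in(-1,1)$ (minus the pole at $0$) by analytic continuation since the combination $a^{-\beta}-b^{-\beta}$ kills the singularity of $\Gamma$ at $\beta=0$ as well. Taking $\beta = 1-\alpha \in (-1,0)$, $a = 1-u$, $b = 1$ gives
\[
\int_0^\infty \left(e^{-(1-u)\xi} - e^{-\xi}\right)\xi^{-\alpha}\,d\xi
= \Gamma(1-\alpha)\left((1-u)^{\alpha-1} - 1\right).
\]
Multiplying by $C(\alpha)$ and using the reflection formula $\Gamma(1-\alpha)\Gamma(\alpha) = \pi/\sin(\pi\alpha)$ together with $\sin(\pi\alpha) = -\sin(\pi(\alpha-1))$, one finds $C(\alpha)\Gamma(1-\alpha) = \sin((\alpha-1)\pi)\Gamma(\alpha)\Gamma(1-\alpha)/\pi = -1$, which yields exactly $1-(1-u)^{\alpha-1}$, establishing \eqref{eq:gamma1}. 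For \eqref{eq:gamma2} the integral is the plain Gamma integral: $\int_0^\infty e^{-\xi}\xi^{1-\alpha}\,d\xi = \Gamma(2-\alpha)$, and then $C(\alpha)\Gamma(2-\alpha) = (\alpha-1)\,C(\alpha)\Gamma(1-\alpha)/\!\ldots$; using $\Gamma(2-\alpha) = (1-\alpha)\Gamma(1-\alpha)$ and $C(\alpha)\Gamma(1-\alpha) = -1$ from above gives $C(\alpha)\Gamma(2-\alpha) = -(1-\alpha) = \alpha - 1$, as claimed.

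The main obstacle is justifying the analytic-continuation step (or, equivalently, justifying Frullani's formula with a non-integrable power weight): one must argue carefully that $F(\beta) := \int_0^\infty (e^{-a\xi} - e^{-b\xi})\xi^{\beta-1}\,d\xi$ is holomorphic on the strip $\mathrm{Re}(\beta) \in (-1,1)$ and agrees with the meromorphic function $\Gamma(\beta)(a^{-\beta} - b^{-\beta})$ there. This is handled by splitting the integral at $\xi = 1$: on $[1,\infty)$ the integrand decays exponentially for any $\beta$, giving an entire function; on $[0,1]$ one Taylor-expands $e^{-a\xi}-e^{-b\xi} = (b-a)\xi + O(\xi^2)$, so the integrand is $O(\xi^{\mathrm{Re}(\beta)})$, integrable precisely when $\mathrm{Re}(\beta) > -1$, and holomorphy follows by Morera/dominated convergence. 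Since $F$ and $\Gamma(\beta)(a^{-\beta}-b^{-\beta})$ coincide on $(0,1)$ by the elementary Gamma integral and both are holomorphic on the strip, they coincide throughout, and in particular at $\beta = 1-\alpha$. Alternatively — and perhaps more in keeping with the elementary tone of the paper — one can avoid complex analysis entirely by integrating by parts: writing $\xi^{-\alpha} = -\frac{1}{\alpha-1}\frac{d}{d\xi}\xi^{1-\alpha}$ and integrating \eqref{eq:gamma1}'s integrand by parts transfers one derivative onto $e^{-(1-u)\xi}-e^{-\xi}$, producing $\frac{1}{\alpha-1}\int_0^\infty \big((1-u)e^{-(1-u)\xi} - e^{-\xi}\big)\xi^{1-\alpha}\,d\xi$ with vanishing boundary terms (the $\xi^{1-\alpha}(\cdots)$ term vanishes at $0$ since $1-\alpha+1 = 2-\alpha > 0$ and at $\infty$ by exponential decay), and now each piece is a genuine convergent Gamma integral equal to $\Gamma(2-\alpha)$ times $(1-u)\cdot(1-u)^{\alpha-2}$ and $1$ respectively; this gives $\frac{\Gamma(2-\alpha)}{\alpha-1}\big((1-u)^{\alpha-1} - 1\big)$, and $C(\alpha)\Gamma(2-\alpha)/(\alpha-1) = -1$ recovers the result. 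I would present this integration-by-parts version as the main argument and remark that \eqref{eq:gamma2} is an immediate corollary with $u = $ the limiting evaluation, or simply the standalone Gamma integral identity $\int_0^\infty e^{-\xi}\xi^{1-\alpha}\,d\xi = \Gamma(2-\alpha)$ combined with the same constant identity $C(\alpha)\Gamma(2-\alpha) = (\alpha-1)$.
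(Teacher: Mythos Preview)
Your integration-by-parts alternative --- the one you propose as the main argument --- is precisely the paper's own proof: the paper writes $w=u-1$, integrates by parts to obtain $\frac{1}{\alpha-1}\int_0^\infty (we^{w\xi}+e^{-\xi})\xi^{1-\alpha}\,d\xi$, evaluates the two Gamma integrals, and then applies Euler's reflection formula in the form $\frac{\Gamma(2-\alpha)}{\alpha-1}=\frac{1}{C(\alpha)}$; equation~\eqref{eq:gamma2} is handled identically. Two cancelling sign slips in your sketch are worth fixing: after integration by parts the integrand is $e^{-\xi}-(1-u)e^{-(1-u)\xi}$ (not the reverse), and correspondingly $C(\alpha)\Gamma(2-\alpha)/(\alpha-1)=+1$, not $-1$; the final answer is unaffected.

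Your first route --- analytic continuation of $\beta\mapsto\Gamma(\beta)(a^{-\beta}-b^{-\beta})$ from the strip $\mathrm{Re}(\beta)\in(0,1)$ to $\beta=1-\alpha\in(-1,0)$ --- is a correct and genuinely different argument. It replaces the single integration by parts with a holomorphy/Morera step, which is slightly heavier machinery but makes the appearance of the reflection formula cleaner via the compact identity $C(\alpha)\Gamma(1-\alpha)=-1$. Either approach is adequate here; the paper opts for the more elementary integration by parts, which avoids any complex-analytic justification.
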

\begin{remark}In Section~\ref{Sec:example} we use the particular case $C(3/2) = \tfrac{1}{2\sqrt{\pi}}$.
\end{remark}
\begin{proof}
Set $w= u-1 \le 0$. By partial integration and substitution
\begin{align*}
&\int_0^\infty \left(e^{u\xi} - 1\right)e^{-\xi}\xi^{-\alpha}d\xi = \frac{1}{\alpha - 1} \int_0^\infty \left(w e^{w\xi} + e^{-\xi}\right)\xi^{1-\alpha}d\xi = \\
&= \frac{1}{\alpha-1} \int_0^\infty e^{-r} r^{1-\alpha} dr \cdot \left(1 - (-w)^{\alpha - 1} \right) = \\
&= \frac{\Gamma(2 - \alpha)}{\alpha-1} \left(1 - (1-u)^{\alpha-1}\right).
\end{align*}
By Euler's reflection formula for the Gamma function
\[\frac{\Gamma(2-\alpha)}{\alpha - 1} = \frac{\pi}{\sin((\alpha -1)\pi) \Gamma(\alpha)} = \frac{1}{C(\alpha)}\]
and \eqref{eq:gamma1} follows. Moreover,
\[C(\alpha) \int_0^\infty e^{-\xi} \xi^{1-\alpha} d\xi = C(\alpha) \Gamma(2-\alpha)  = \alpha - 1\]
and also \eqref{eq:gamma2} follows.
\end{proof}

\bibliographystyle{alpha}
\bibliography{references}
\end{document}